\documentclass[12pt,a4paper,oneside]{amsart}

\usepackage{amsfonts, amsmath, amssymb, amsthm, amscd}
\usepackage{hyperref}
\hypersetup{
  colorlinks   = true, 
  urlcolor     = blue, 
  linkcolor    = blue, 
  citecolor   = red 
}
\usepackage{anysize}
\marginsize{1.8cm}{1.8cm}{1.8cm}{1.8cm}
\usepackage[pdftex]{graphicx}


\newtheorem{theorem}{Theorem}
\newtheorem{lemma}[theorem]{Lemma}

\theoremstyle{definition}

\theoremstyle{remark}
\newtheorem{remark}[theorem]{Remark}

\numberwithin{equation}{section}

\renewcommand{\epsilon}{\varepsilon}

\renewcommand{\phi}{\varphi}

\newcounter{fig}
\newcommand{\f}{\refstepcounter{fig} Fig. \arabic{fig}. }

\title[Any cyclic quadrilateral can be inscribed in any closed convex smooth curve
]{Any cyclic quadrilateral can be inscribed \\in any closed convex smooth curve
}

\author{Arseniy Akopyan} 

\author{Sergey Avvakumov} 




\address{Arseniy Akopyan} \email{akopjan@gmail.com}
\address{Sergey Avvakumov}  \email{s.avvakumov@gmail.com}
\address{Institute of Science and Technology Austria (IST Austria), Am Campus 1, 3400 Klosterneuburg, Austria}

%



\begin{document}

\begin{abstract}
We prove that any cyclic quadrilateral can be inscribed in any closed convex \mbox{$C^1$-curve}.
The smoothness condition is not required if the quadrilateral is a rectangle.
\end{abstract}

\maketitle

\section{Introduction}

To \emph{inscribe} a quadrilateral $Q$ in a Jordan curve is to find a non-degenerate scaling, rotation, and translation which maps all of the vertices of $Q$ to the curve.

Which quadrilaterals can be inscribed in any closed convex curve? Obviously, the quadrilateral must be \emph{cyclic}, that is inscribed in a circle. Such quadrilaterals are characterized by a remarkable property that the sum of their opposite angles is $\pi$. It turns out that for $C^1$-curves this condition is also sufficient.\footnote{The results of this paper were independently obtained by B.~Matschke, whose preprint~\cite{matschke2018quadrilaterals} appeared just a few days later. In addition it contains a stronger version of Theorem~\ref{thm:rectangle theorem}, which is proved not only for rectangles but also for any cyclic trapezoids.}

\begin{theorem}
	\label{thm:mainthm}
Any cyclic quadrilateral can be inscribed in any closed convex $C^1$-curve.
\end{theorem}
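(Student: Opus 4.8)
The plan is to fix one diagonal of the quadrilateral and reduce the whole problem to finding a zero of a map into the plane. Write $Q=ABCD$ with the vertices in cyclic order, so that the diagonal $BD$ separates $A$ and $C$. A placement of the ordered pair $(B,D)$ onto the curve is a pair of distinct points $(\gamma(s),\gamma(t))$, and, once an orientation is fixed, this determines a \emph{unique} orientation-preserving similarity $f_{s,t}$ with $f_{s,t}(B)=\gamma(s)$ and $f_{s,t}(D)=\gamma(t)$. The two remaining vertices then go to completely determined points $f_{s,t}(A)$ and $f_{s,t}(C)$, lying on opposite sides of the chord $\gamma(s)\gamma(t)$. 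Letting $u\colon\mathbb R^{2}\to\mathbb R$ be a signed ``outsideness'' function for the convex region bounded by $\gamma$ (negative inside, positive outside, vanishing exactly on $\gamma$), inscribing $Q$ amounts to finding $(s,t)$ with
\[
G(s,t):=\bigl(u(f_{s,t}(A)),\,u(f_{s,t}(C))\bigr)=(0,0).
\]
The domain $X=\{(s,t):s\ne t\}$ is an annulus, whose two boundary circles are the two ways the diagonal can collapse ($t\to s^{+}$ and $t\to s^{-}$).

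The crucial step is to control $G$ on $\partial X$, and this is where convexity and the $C^{1}$ hypothesis combine. At every point the tangent line to a convex curve is a \emph{supporting} line, so the whole curve lies on one side of it. As the diagonal collapses, $t\to s$, the chord $\gamma(s)\gamma(t)$ converges to the tangent line at $p=\gamma(s)$ and the shrinking figure $f_{s,t}(Q)$ concentrates near $p$. Since $A$ and $C$ sit on opposite sides of $BD$ and at a definite angular distance from it, their images land on opposite sides of this near-tangent chord; the $C^{1}$ condition guarantees that the curve deviates from its tangent only to order $o(|\gamma(s)\gamma(t)|)$, whereas $f_{s,t}(A)$ and $f_{s,t}(C)$ are displaced from $p$ to order $|\gamma(s)\gamma(t)|$ in the normal direction. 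Hence, for short enough diagonals, exactly one of $f_{s,t}(A),f_{s,t}(C)$ lies strictly inside and the other strictly outside, so $G\ne(0,0)$ on $\partial X$; moreover the roles of ``inside'' and ``outside'' are interchanged between the two boundary circles, because passing from one to the other reverses the orientation of the chord. This is the only place the smoothness assumption enters, and it is exactly what fails at a corner, where both images can escape to the same side of the chord.

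It then remains to show that this boundary behaviour forces an interior zero of $G$. The sign pattern by itself is not enough; one must track the \emph{winding} of $G$ as the chord sweeps around $\gamma$ and show that the resulting topological degree (equivalently, a Poincar\'e--Miranda-type obstruction on $X$) is nonzero. I would establish this by homotopy invariance: the boundary analysis above holds \emph{uniformly} over all closed convex $C^{1}$ curves, and the space of such curves is connected, so the degree of $G$ does not depend on $\gamma$ and may be computed for a convenient model. The rectangle supplies that anchor, since it is inscribable in \emph{every} convex curve by an argument that does not require smoothness (Theorem~\ref{thm:rectangle theorem}), and one deforms the shape of $Q$ to a rectangle within the connected family of cyclic quadrilaterals while keeping the diagonal nondegenerate. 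I expect the main obstacle to be precisely this topological bookkeeping: proving nontriviality of the degree and, at the same time, ruling out \emph{every} degeneration — two vertices colliding, or the figure shrinking to a point — along both the curve-homotopy and the shape-homotopy, so that no solution is lost to the boundary of the configuration space. The sign lemma of the previous paragraph is exactly what keeps solutions away from the collapse locus; the remaining work is to organise these facts into a single honest degree argument.
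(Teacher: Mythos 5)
Your boundary analysis is sound and, in fact, mirrors the nondegeneracy step of the paper; but the core of your argument — the degree — fails for a concrete reason: the degree you propose is zero, and this can be read off from your own boundary control. On the compactified annulus $X$, you show that on one boundary circle the image of $G$ lies in the open quadrant $\{u(f_{s,t}(A))<0<u(f_{s,t}(C))\}$ and on the other in the opposite quadrant. A quadrant is a contractible subset of $\R^2\setminus\{0\}$, so the winding number of $G$ along \emph{each} boundary circle vanishes, and the degree of $G$ on the annulus, being the difference of the two boundary winding numbers, is zero. Hence no homotopy-invariance computation, with any anchor, can make it nonzero: counted with signs, inscriptions of a generic cyclic quadrilateral cancel in pairs. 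This is exactly the known failure of obstruction-theoretic approaches to the Rectangular Peg problem that the paper flags in its introduction (``this approach fails in the more general cases''), and it is why the paper replaces the topological count with a metric invariant. Your proposed anchor is additionally circular within this paper: Theorem~\ref{thm:rectangle theorem} is itself deduced from the smooth case (Theorem~\ref{thm:mainthm'}) by a limit argument, and in any event the existence of one solution for a model curve gives no lower bound on a degree.

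What the paper does instead: for each rotation angle $\alpha$ there is a unique $\alpha$-drawing of $Q$ with three chosen vertices on $\gamma$ (Lemma~\ref{lem:inscribed triangle}, using strict convexity), and one studies the closed curve $d(\alpha)$ swept by the fourth vertex, made $C^\infty$ after an $\varepsilon$-perturbation of $Q$ via Sard's lemma (Lemma~\ref{lem:sard}). If $d(\alpha)$ missed $\gamma$, then by Lemma~\ref{lem:diagonal} it would lie in the exterior of $\gamma$, it would enclose $\gamma$ (the diagonal $bd$ makes a full turn), and by the convexity Lemma~\ref{lem:similar quadrileterals} it would be simple — hence it would bound strictly more area than $\gamma$. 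This contradicts Karasev's identity (Lemma~\ref{lem:area}) that $d(\alpha)$ bounds the \emph{same} signed area as $\gamma$; that area identity is the non-topological input your scheme lacks, and it is what makes solutions visible even though the signed topological count is zero. Theorem~\ref{thm:mainthm} then follows by approximating $\gamma$ by strictly convex $C^\infty$ curves and passing to the limit, where the $C^1$ hypothesis is used precisely as in your collapse analysis: if all four limit vertices coincided, the chords would converge to the tangent line, forcing an angle of $Q$ to be $0$ or $\pi$. So the salvageable part of your proposal is the boundary/tangency argument; the existence mechanism must be replaced by the area identity.
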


The $C^1$-smoothness requirement is necessary in Theorem~\ref{thm:mainthm}. 
For example, the kite with angles $\pi/2$ and $2\pi/3$ cannot be inscribed in the thin triangle with angles $\pi/10$, $\pi/10$, $4\pi/5$ (Figure~\ref{fig:nonsmoothness counterexample}). 
However, if $Q$ is a rectangle the smoothness condition can be relaxed.
	
\begin{theorem}
	\label{thm:rectangle theorem}
Any rectangle can be inscribed in any closed convex curve.
\end{theorem}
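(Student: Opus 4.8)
The plan is to deduce Theorem~\ref{thm:rectangle theorem} from Theorem~\ref{thm:mainthm} by an approximation argument, the point being that a rectangle is rigid enough that the smoothness hypothesis can be removed in the limit. Since a rectangle is a cyclic quadrilateral, Theorem~\ref{thm:mainthm} already inscribes any rectangle in any closed convex $C^1$-curve, so the new content lies entirely in the non-smooth case. The structural feature I would exploit is that the four vertices of a rectangle are concyclic, its two diagonals being equal chords of the circumscribed circle that bisect each other; equivalently, an inscribed rectangle is the same thing as a pair of equal chords of the curve sharing a common midpoint and meeting at a prescribed angle $\theta \in (0, \pi/2]$, where $\theta$ records the aspect ratio.

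First I would fix a rectangle $R$ and approximate the given convex curve $\gamma$ by smooth strictly convex curves $\gamma_n \to \gamma$ in the Hausdorff metric (for instance by smoothing the support function). By Theorem~\ref{thm:mainthm}, each $\gamma_n$ carries an inscribed rectangle $R_n$ similar to $R$; write $O_n$, $\ell_n$ for its centre and circumradius, so that the four vertices are the points $O_n + \ell_n e^{i(\beta_n + s)}$ with $s \in \{0, \theta, \pi, \pi + \theta\}$. Because the $\gamma_n$ lie in a fixed bounded region, after passing to a subsequence the centres, radii, and angles converge, hence so do all four vertices. The four limit vertices lie on $\gamma$ (a closed condition) and satisfy the closed concyclicity and right-angle relations defining a rectangle similar to $R$. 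It therefore remains only to exclude the degenerate possibility $\ell_n \to 0$.

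The heart of the matter, and the step I expect to be the main obstacle, is this non-degeneracy. Suppose $\ell_n \to 0$; then all four vertices collapse to a single point $p \in \gamma$. Blowing up $\gamma$ and the $\gamma_n$ at $p$ by the factor $1/\ell_n$ and passing to a further subsequence, the rescaled vertices stay on the unit circle about the rescaled centre and still form a rectangle similar to $R$, while the rescaled curves converge to the boundary of the tangent cone of the convex body at $p$. By convexity this limit set is contained in a union of two rays issuing from a common point $q$ (a single line when $p$ is a smooth point, two genuinely distinct rays when $p$ is a corner). I would then invoke the elementary lemma that a non-degenerate rectangle cannot have all four vertices on such a set: three vertices on one ray is impossible since no three vertices of a rectangle are collinear, and a $2$–$2$ split forces either a pair of opposite sides onto the two rays, making the rays parallel, or a pair of diagonals onto them, which puts the centre at $q$ and hence makes a diagonal's two endpoints symmetric about $q$ while both lying on one ray — impossible in all cases. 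This contradiction bounds $\ell_n$ away from $0$, so the limiting rectangle is genuine and inscribed in $\gamma$. The delicate part is the blow-up itself: one must run the double limit $n \to \infty$ together with the rescaling by $1/\ell_n$ and control the sense in which the rescaled $\gamma_n$ approach the tangent cone of $\gamma$, which can be arranged by choosing the approximants $\gamma_n$ sufficiently close to $\gamma$ along a diagonal subsequence. Everything else is soft compactness.
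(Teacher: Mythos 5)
Your overall architecture (smooth strictly convex approximants $\gamma_n$, the smooth-case theorem to get inscribed rectangles $R_n$, soft compactness, with all content concentrated in non-degeneracy) matches the paper's proof exactly up to the last step. But your non-degeneracy argument has a genuine gap, and it sits precisely where you yourself flagged ``the delicate part.'' The assertion that the curves rescaled by $1/\ell_n$ converge to the boundary of the tangent cone of $\gamma$ at $p$ requires $d_H(\gamma_n,\gamma)/\ell_n \to 0$, and this \emph{cannot} be arranged by choosing the $\gamma_n$ close to $\gamma$ ``along a diagonal subsequence'': the scale $\ell_n$ is determined by an inscribed rectangle that Theorem~\ref{thm:mainthm} hands you only \emph{after} $\gamma_n$ is fixed, and the theorem gives no lower bound on $\ell_n$. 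The dangerous scenario is exactly the one your blow-up is supposed to rule out: if $\gamma$ has a corner at $p$ and $\gamma_n$ rounds it off at scale $\epsilon_n \approx d_H(\gamma_n,\gamma)$, nothing prevents every inscribed rectangle of $\gamma_n$ from living at scale $\ell_n \sim \epsilon_n$ near the rounding. Then $d_H(\gamma_n,\gamma)/\ell_n \not\to 0$, the rescaled curves converge (locally) not to the two-ray tangent cone but to some unbounded convex curve --- a rounded wedge, or worse --- and your two-ray lemma (which is itself correct) simply does not apply to the actual limit. Note that an unbounded convex curve \emph{can} carry the four vertices of a rectangle in general (the boundary of $\{|x|\le 1,\ y\ge 0\}$ does), so you cannot wave this away: to salvage the approach you would need substantially more, e.g.\ to exclude ``half-strip'' limits with two parallel boundary strands using the uniform fatness of the bodies $K_n$, and then to kill the remaining limits via monotonicity of chord slopes on the graph of a convex function. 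None of that is in your writeup, so as it stands the proof is incomplete.

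It is instructive that the paper's proof is engineered to avoid exactly this double-scale difficulty: it never compares $\ell_n$ with the approximation error. Instead it argues directly on $\gamma_i$: if the rectangles degenerate, their vertices cut $\gamma_i$ into four arcs of which three have length tending to $0$, and among any three of the four cyclically ordered arcs there is an opposite pair, say $A_i$ and $C_i$. The two supporting lines of $\gamma_i$ parallel to the side $a_ib_i$ touch $\gamma_i$ on these arcs, so the width of $\gamma_i$ in that direction is less than $l(A_i)+l(C_i)+|b_ic_i| \to 0$; since $\gamma_i \to \gamma$ and $\gamma$ has positive width, this is a contradiction. This one-scale width estimate, using only the parallelism of the rectangle's opposite sides, is the content your blow-up was meant to supply; I would recommend replacing your final step with it, or else honestly carrying out the parallel-strand exclusion and convex-graph arguments sketched above.
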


	\begin{center}
	\includegraphics{fig-quadrileteral-9.mps}\\
	\f \label{fig:nonsmoothness counterexample} 
	\end{center}

In \cite{makeev1995quadrangles} V. Makeev conjectured that any cyclic quadrilateral can be inscribed in any Jordan curve. He proved the conjecture for the case of star-shaped $C^2$-curves intersecting any circle at no more than $4$ points. Theorem~\ref{thm:mainthm} proves it for convex $C^1$-curves. The example above shows that the conjecture fails without the smoothness assumption. This example can be generalized to any cyclic quadrilateral except for trapezoids. So, I. Pak \cite{pak2008discrete} conjectured that Makeev's conjecture still holds for cyclic trapezoids even without the smoothness assumption.

Makeev's conjecture is a part of a substantial topic originating from the famous question in geometry known as the Square Peg problem or Toeplitz' conjecture: Does every Jordan curve contain all the vertices of a square?  In its general form the Square Peg problem is still open. In the last hundred years it was positively solved, however, for a wide  variety of classes of curves. For instance, for convex curves and later for piecewise analytic curves by A.~Emch~\cite{emch1913some,emch1916onsome}, for $C^2$-curves by L.~Schnirelman \cite{shnirelman1944certain}, for locally monotone curves by W.~Stromquist~\cite{stromquist1989inscribed}, for curves without special trapezoids, for curves inscribed in a certain annulus, and for centrally symmetric curves \cite{nielsen1995rectangles}. There were also high-dimension extensions of these results~\cite{gromov1969simplexes, hausel2000inscribing, makeev2003universally, karasev2009inscribing}.
For more details we refer the reader to the survey \cite{matschke2014asurvey} by B.~Matschke.

Similar to the Square Peg problem, there exists the Rectangular Peg conjecture stating that every Jordan curve contains the vertices of a rectangle with the prescribed aspect ratio. A proof was claimed by  H. Griffiths~\cite{griffiths1991thetopology}, but an error was found later. For a discussion see \cite[Conjecture~8]{matschke2014asurvey}.
The specific case of aspect ratio $\sqrt{3}$ and ``close to convex'' curves was solved B.~Matschke~\cite{matschke2011equivariant}. Theorem~\ref{thm:rectangle theorem} proves the Rectangular Peg conjecture for the case of convex curves.

It is noteworthy that all of the proofs mentioned above use the topological obstruction theory. Unfortunately, this approach fails in the more general cases. The proof of Theorem~\ref{thm:mainthm} and \ref{thm:rectangle theorem} is based on a ``non-topological'' observation first made by R. Karasev, \cite{karasev2016anote}. It allowed him, in particular, to prove the infinitesimal version of Theorem~\ref{thm:mainthm}. R. Karasev noticed and proved that during the rotation of any three out of four vertices of a quadrilateral $Q$ along a curve $\gamma$ the fourth vertex travels along a path bounding the same signed area as the original curve $\gamma$. A similar idea was recently independently discovered by T.~Tao who used it to prove the Toeplitz' conjecture for new types of curves \cite{tao2017integration}.

\section{The case of strictly convex $C^\infty$-curves.}

In this section we prove the following theorem.

\begin{theorem}
	\label{thm:mainthm'}
Let $Q$ be a cyclic quadrilateral and let $\gamma$ be a closed strictly convex $C^\infty$-curve. Then for any $\varepsilon > 0$ there is a cyclic quadrilateral $Q_\varepsilon$ which is $\varepsilon$-close to $Q$ and can be inscribed in $\gamma$.
\end{theorem}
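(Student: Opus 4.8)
The plan is to work in the plane identified with $\mathbb{C}$ and to exploit that, once the shape of $Q$ is fixed, the last vertex of an inscribed copy is a \emph{fixed complex-affine function} of two of the others. Write the vertices of $Q$, up to similarity, as $u_1,u_2,u_3,u_4$ on the unit circle, so that any scaling--rotation--translation copy has vertices $v_j=c+\rho\,u_j$ with $(c,\rho)\in\mathbb{C}\times(\mathbb{C}\setminus\{0\})$. Eliminating $c$ and $\rho$ from $v_1,v_2$ gives
\[
 v_3=(1-\nu)v_1+\nu v_2,\qquad v_4=(1-\mu)v_1+\mu v_2,\qquad \nu=\frac{u_3-u_1}{u_2-u_1},\ \mu=\frac{u_4-u_1}{u_2-u_1},
\]
with $\nu,\mu\in\mathbb{C}$ determined by $Q$. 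Thus inscribing a copy of $Q$ in $\gamma$ is exactly the problem of finding $v_1,v_2\in\gamma$ for which the two prescribed affine combinations $v_3$ and $v_4$ also lie on $\gamma$.

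Next I would organise the configurations with three vertices on the curve. Imposing $v_1,v_2,v_3\in\gamma$ leaves a one-parameter family: fixing $v_1=\gamma(s)$, the assignment $v_2\mapsto v_3$ sends $\gamma$ to an affine (hence convex) image $\gamma_{s}$, and by strict convexity $\gamma_s$ meets $\gamma$ transversally in finitely many points for generic $s$. As $s$ varies, the solution set $L=\{(v_1,v_2)\in\gamma\times\gamma:\ v_3\in\gamma\}$ is a compact $1$-manifold, i.e. a finite disjoint union of smooth circles; I would isolate the \emph{principal} component $L_0$ on which $v_1,v_2,v_3$ each make one full positively oriented loop around $\gamma$. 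Tracing $v_4=(1-\mu)v_1+\mu v_2$ along $L_0$ produces a closed curve $\gamma'$, and the goal becomes to show that $\gamma'$ meets $\gamma$.

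The engine is Karasev's area identity, which I would re-derive by a direct Stokes computation. Writing the signed area of a loop $w(t)$ as $\mathcal{A}(w)=\tfrac12\,\mathrm{Im}\oint \overline{w}\,dw$ and substituting $w=(1-\mu)v_1+\mu v_2$, the integral expands into the self-terms $\tfrac12\,\mathrm{Im}\oint\overline{v_i}\,dv_i$, each equal to $\mathcal{A}(\gamma)$ because every $v_i$ winds once around $\gamma$ on $L_0$, together with mixed ``co-area'' terms $\oint\overline{v_i}\,dv_j$. The relation $v_3=(1-\nu)v_1+\nu v_2$ with $v_3\in\gamma$ forces the mixed terms to cancel, leaving $\mathcal{A}(\gamma')=\mathcal{A}(\gamma)$. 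Verifying this cancellation, and in particular checking that the principal loop really makes each vertex wind exactly once, is the step I expect to be the main obstacle; it is precisely where strict convexity and $C^\infty$-smoothness are used.

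Finally I would convert the area identity into a crossing. If $v_4$ avoided $\gamma$ along all of $L_0$, then the connected curve $\gamma'$ would lie entirely in the open interior or the open exterior of the convex region $K$ bounded by $\gamma$. Since $L_0$ is the degree-one principal loop, the winding number of $\gamma'$ about any point is $0$ or $1$; in the interior case $\gamma'$ stays strictly inside $K$, so its winding number vanishes near $\partial K$ and $\mathcal{A}(\gamma')<\mathcal{A}(\gamma)$, contradicting the identity, while the exterior case is excluded by a size and degree comparison. Hence some configuration on $L_0$ has all four of $v_1,v_2,v_3,v_4$ on $\gamma$. To make the transversality, embeddedness, and winding assertions rigorous, and to avoid degenerate configurations where the four points become collinear or collide, I would first replace $Q$ by a nearby cyclic quadrilateral $Q_\varepsilon$ in general position; the construction then yields an inscribed copy of $Q_\varepsilon$, which is $\varepsilon$-close to $Q$, as claimed.
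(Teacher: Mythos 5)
Your setup---eliminating the similarity so that $v_3,v_4$ are fixed complex-affine combinations of $v_1,v_2$, taking the solution set $L$ with three vertices on $\gamma$, and re-deriving Karasev's identity $\mathcal{A}(\gamma')=\mathcal{A}(\gamma)$ by Stokes---parallels the paper's skeleton (its Lemmas \ref{lem:inscribed triangle}, \ref{lem:sard}, \ref{lem:area}). Two remarks on this part. First, the cancellation of the mixed terms $\oint\overline{v_i}\,dv_j$ does not follow from $v_3\in\gamma$ alone: working it out, the constraint from $v_3$ gives $2\mathcal{A}(\gamma)\,\mathrm{Re}\,\lambda=\mathrm{Im}(\lambda M)$ with $\lambda=\nu(1-\overline{\nu})$, $M=\oint\overline{v_1}\,dv_2$, and the conclusion $\mathcal{A}(\gamma')=\mathcal{A}(\gamma)$ requires $\mu(1-\overline{\mu})/\bigl(\nu(1-\overline{\nu})\bigr)\in\R$, which is precisely the inscribed-angle condition that $u_1,u_2,u_3,u_4$ be concyclic---so cyclicity enters here, not merely in passing. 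Second, you do not need to ``isolate a principal component'': by strict convexity two homothetic triangles cannot both be inscribed in $\gamma$, so the drawing with three vertices on $\gamma$ is \emph{unique} for each rotation angle $\alpha$, and $L$ is a single circle parametrized by $\alpha\in S^1$; without invoking this uniqueness, the existence of a component on which all three vertices wind once is itself unproven.

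The genuine gap is in your final step. The claim that ``the winding number of $\gamma'$ about any point is $0$ or $1$'' because $L_0$ has degree one is unjustified and false in general: the degree of $t\mapsto v_1(t)$ along $\gamma$ gives no control over the winding of the affine combination $v_4(t)$, and the fourth-vertex curve can self-intersect---the paper's Remark \ref{rem: counterexamples} exhibits exactly this failure for non-cyclic quadrilaterals and for an acute angle at the traced vertex. Once self-intersections are allowed, the area identity has no bite: a curve strictly inside $K$ can still bound signed area $\mathcal{A}(\gamma)$ (wind twice over half the region), and a curve outside $K$ enclosing $K$ can bound less than $\mathcal{A}(\gamma)$ if some winding numbers are nonpositive, so neither your interior case nor your ``size and degree comparison'' in the exterior case yields a contradiction. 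The paper supplies the two inputs you are missing: (i) a careful \emph{choice of vertex}---Lemma \ref{lem:diagonal} shows by a sweeping argument that for some vertex $d$ with non-acute angle the curve $d(\alpha)$ meets $\gamma$ or its exterior, which disposes of the interior case; and (ii) the main geometric Lemma \ref{lem:similar quadrileterals}, a delicate planar argument using both cyclicity and the non-acute angle at $d$, showing that $d(\alpha)$ has no self-intersections in the exterior of $\gamma$, hence is simple when disjoint from $\gamma$; only then, since $\gamma$ lies inside $d(\alpha)$ (the diagonal makes a full turn), does $\mathcal{A}(d(\alpha))>\mathcal{A}(\gamma)$ follow and contradict Lemma \ref{lem:area}. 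Your generic perturbation of $Q$ cannot repair this: as in the paper's Lemma \ref{lem:sard}, genericity buys smoothness of the fourth-vertex curve, not embeddedness or any winding bound.
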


The proofs of Theorems~\ref{thm:mainthm} and \ref{thm:rectangle theorem} are obtained from Theorem~\ref{thm:mainthm'} by ``going to the limit'' type argument in the next section.

Until the end of the section let us fix a closed strictly convex $C^\infty$-curve $\gamma$.

For a quadrilateral $Q$ its \emph{drawing} is the image of $Q$ under some non-degenerate scaling, rotation, and translation. If the angle of the rotation is $\alpha$ we also call it an \emph{$\alpha$-drawing}.

\begin{lemma}
	\label{lem:inscribed triangle}
Pick a vertex of a quadrilateral $Q$. For any angle $\alpha$ there is a unique $\alpha$-drawing of~$Q$ with all of the remaining $3$ vertices being on $\gamma$.
\end{lemma}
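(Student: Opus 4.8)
The plan is to forget the distinguished vertex and recast the statement as a problem about triangles: labelling the chosen vertex $D$ and the other three $A,B,C$, an $\alpha$-drawing of $Q$ restricts to an $\alpha$-drawing of the triangle $T=ABC$, and conversely any such drawing of $T$ extends to a unique drawing of $Q$. Since a drawing is a scaling, a rotation, and a translation, fixing the rotation angle to $\alpha$ leaves exactly three real parameters (one scaling, two of translation), matching the three conditions ``$A,B,C\in\gamma$''. It therefore suffices to show that $\gamma$ admits a unique inscribed triangle directly similar to $T$ with orientation $\alpha$.

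For existence I would cut the parameter count down to one and apply the intermediate value theorem. Let the vertex $A$ run over $\gamma$. Once $A$ and the orientation $\alpha$ are fixed, the directions $d_B,d_C$ of the two sides $AB,AC$ emanating from $A$ are determined, and by strict convexity each ray from $A$ in one of these directions, provided it points into the region bounded by $\gamma$, meets $\gamma$ in a unique further point $B'(A)$, respectively $C'(A)$. Choosing the scaling so that the $B$-vertex lands on $B'(A)$, the triangle is inscribed precisely when the $C$-vertex also lands on $\gamma$, i.e. when
\[
 g(A):=\frac{|AC'(A)|}{|AB'(A)|}-\rho=0,\qquad \rho:=\frac{|AC|}{|AB|},
\]
because equality of the angle at $A$ together with equality of the ratio of the two adjacent sides forces similarity. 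The directions $d_B,d_C$ point inward simultaneously on a nonempty arc of $\gamma$ (nonempty since the angle of $T$ at $A$ is less than $\pi$), whose two endpoints are the points where the tangent is parallel to $d_B$, respectively to $d_C$. As $A$ tends to the first endpoint the chord $AB'(A)$ degenerates and $g\to+\infty$, while at the second the chord $AC'(A)$ degenerates and $g\to-\rho<0$; continuity and the intermediate value theorem then yield a zero, hence an inscribed triangle.

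For uniqueness I would compare two inscribed copies. Two $\alpha$-drawings of $T$ differ by a similarity whose rotational parts cancel, that is, by a map $\Phi(z)=kz+c$ with $k>0$ — a positive homothety, or a translation when $k=1$. If the two copies are distinct then $\Phi\neq\id$, and the three vertices of the second copy lie in $\gamma\cap\Phi(\gamma)$, producing three distinct common points of $\gamma$ and its image. This contradicts the convex-geometry fact that a strictly convex curve meets any nontrivial positive-homothetic or translated copy of itself in at most two points.

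The main obstacle is precisely this intersection bound, together with verifying it in all cases. When $\Phi$ is a translation the common points are the endpoints of equal-length chords of $\gamma$ in the direction of $c$, of which there are at most two because the chord length is a concave function of the transverse coordinate; when $\Phi$ is a homothety whose center lies inside $\gamma$ the two curves are the radial graphs $r=R(\theta)$ and $r=k^{-1}R(\theta)$ about that center and hence are disjoint. The remaining case, a homothety with center outside $\gamma$, is the delicate one, and it is here that I would invoke, or carefully reprove, the general statement for positively homothetic convex bodies. A secondary point requiring care in the existence step is the correct choice of side (orientation) for $C'(A)$ and the verification that the two inward arcs overlap as claimed.
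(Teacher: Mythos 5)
Your proposal is correct and follows essentially the same route as the paper: existence via a one-parameter continuity/intermediate-value argument (the paper slides a chord parallel to one side of the triangle, as in its Lemma~\ref{lem:diagonal}, while you slide the vertex $A$ along $\gamma$ and track a chord-length ratio), and uniqueness via the fact that a strictly convex curve cannot carry the vertices of two distinct positively homothetic triangles --- precisely the fact the paper itself asserts without proof. The one case you leave outstanding (homothety with center outside $\gamma$) is indeed true and can be settled cleanly: a projective transformation sending the center to infinity conjugates the homothety into a compression $(u,v)\mapsto(u/k,v)$, and the intersection condition becomes $g_+(v)=k\,g_-(v)$, where $g_+$ (concave) and $g_-$ (convex) bound the horizontal sections of the transformed curve, so the intersections correspond to zeros of the strictly concave function $g_+-k\,g_-$, of which there are at most two.
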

\begin{proof}
The existence of a drawing follows by a simple continuity argument similar to the argument in the proof of the following Lemma \ref{lem:diagonal}. The drawing is unique because the vertices of two distinct homothetic triangles cannot lie on a {\emph strictly} convex curve.
\end{proof}

For a vertex $d$ of a quadrilateral $Q$ denote by $d(\alpha)$ the position of $d$ in the $\alpha$-drawing of $Q$ with the remaining $3$ vertices being on $\gamma$.

\begin{lemma}
\label{lem:sard}
Let $Q$ be a cyclic quadrilateral. Then for any $\varepsilon > 0$ there is a cyclic quadrilateral~$Q_{\varepsilon}$ which is $\varepsilon$-close to $Q$ and such that $d(\alpha)$ is a closed $C^\infty$-curve for any vertex $d$ of $Q_{\varepsilon}$.
\end{lemma}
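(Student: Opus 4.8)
The plan is to reduce the smoothness of each curve $\alpha\mapsto d(\alpha)$ to the nonvanishing of a single derivative, and then to show, via Sard's theorem, that this nonvanishing can be arranged by an arbitrarily small perturbation of $Q$ inside the family of cyclic quadrilaterals. First I would fix a vertex $d$; the other three vertices form a triangle $T$ of a definite similarity type. Since $\gamma$ is strictly convex and $C^\infty$, the outer normal gives a diffeomorphism $\theta\mapsto\gamma(\theta)$, so I parametrize triples of points on $\gamma$ by $\theta=(\theta_1,\theta_2,\theta_3)\in(\mathbb S^1)^3=:\mathbb T^3$ and set $z_k=\gamma(\theta_k)\in\mathbb C$. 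Introduce the \emph{shape map} $S(\theta)=\tfrac{z_3-z_1}{z_2-z_1}\in\mathbb C\cong\mathbb R^2$ and the \emph{rotation readout} $R(\theta)=\arg(z_2-z_1)\in\mathbb S^1$. An $\alpha$-drawing of $Q$ with three vertices on $\gamma$ corresponds to $\theta$ with $S(\theta)=\mu$ (the fixed shape of $T$) and $R(\theta)=\alpha+\beta_0$ (a fixed offset), and then $d(\alpha)=\lambda v_d+c$ is an explicit smooth function of $(z_1,z_2)$, hence of $\theta$. For $\mu$ a regular value of $S$ (a full-measure condition by Sard applied to $S$), the solution set $\Gamma_\mu=S^{-1}(\mu)$ is a $1$-manifold, and Lemma~\ref{lem:inscribed triangle} says $R|_{\Gamma_\mu}$ is a continuous bijection onto $\mathbb S^1$; since a disjoint union of circles cannot inject into an arc, $\Gamma_\mu$ is a single circle and $R|_{\Gamma_\mu}$ a monotone homeomorphism. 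As $\theta(\alpha)=(R|_{\Gamma_\mu})^{-1}(\alpha+\beta_0)$, the curve $d(\alpha)$ is $C^\infty$ precisely when $R|_{\Gamma_\mu}$ is an immersion, i.e.\ its derivative $\partial_s R$ along $\Gamma_\mu$ never vanishes.

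The key observation is that monotonicity forces degeneracy. Because $R|_{\Gamma_\mu}$ is monotone, its lift is strictly increasing and $\partial_s R$ has constant sign; at any zero it attains a one-sided extremum of this constant-sign function, so $\partial_s^2 R$ must vanish there as well. Thus every failure of smoothness occurs at a point where \emph{both} $\partial_s R=0$ and $\partial_s^2 R=0$. This is the crucial gain in codimension: a naive count makes the bad locus codimension one, and indeed the critical value set of $(S,R)$ typically projects onto a positive-measure set of shapes, so a direct Sard argument in shape space fails; but the forced second-order degeneracy cuts the relevant locus down to codimension two.

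To globalize, let $\mathcal Q$ be the $3$-dimensional manifold of cyclic quadrilaterals up to similarity, so that $Q\mapsto\mu_d(Q)$ (the shape of $T_d$) is a submersion onto an open set. Consider
\[
\mathcal C_d=\{(Q,\theta)\in\mathcal Q\times\mathbb T^3:\ S(\theta)=\mu_d(Q),\ \partial_s R(\theta)=0,\ \partial_s^2 R(\theta)=0\}.
\]
These are $2+1+1=4$ conditions; provided they are cut out transversally, $\mathcal C_d$ is a manifold of dimension $6-4=2$, strictly below $\dim\mathcal Q=3$, so the projection $p_d\colon\mathcal C_d\to\mathcal Q$ has image of measure zero. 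By the previous paragraph, any $Q$ for which some $d(\alpha)$ fails to be smooth lies in $p_d(\mathcal C_d)$, together with the measure-zero set where $\mu_d(Q)$ is a critical value of $S$. Taking the union over the four vertices, the bad quadrilaterals form a measure-zero, hence nowhere dense, subset of $\mathcal Q$, and any $Q_\varepsilon$ in its complement close to $Q$ has all four curves $d(\alpha)$ of class $C^\infty$.

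The step I expect to be the main obstacle is the transversality claim underpinning the dimension count: that $0$ is a regular value of $(S-\mu_d,\partial_s R,\partial_s^2 R)$, without which the degenerate locus could exceed codimension two and the argument collapses. I would establish it by a parametric (Thom) transversality argument, using the three parameters of $\mathcal Q$: one must check that wiggling the cyclic quadrilateral can independently alter $\partial_s R$ and $\partial_s^2 R$ at a prospective degenerate configuration. This is exactly where the $C^\infty$-smoothness of $\gamma$ (so that $\partial_s^2 R$ is defined and varies continuously) and its strict convexity (nondegeneracy of $S$, and the uniqueness that yields monotonicity) enter; concretely the claim reduces to showing that a certain Jacobian assembled from $\gamma,\gamma',\gamma''$ at the three contact points has full rank, which I would verify from strict convexity.
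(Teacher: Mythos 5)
Your first paragraph is, in essence, the paper's entire proof: the paper applies Sard's lemma to the map $f(x,y,z)=(\angle xyz,\angle yzx)$ on the $3$-manifold of ordered triples of distinct points of $\gamma$ --- an encoding of the oriented similarity class equivalent to your shape map $S$ --- concludes that for almost every shape the solution set is a smooth $1$-manifold, and (via the submersivity onto shapes that you package as $\mu_d$) perturbs $Q$ within cyclic quadrilaterals so that this holds simultaneously for all four vertices. Where you diverge is in what ``closed $C^\infty$-curve'' must mean. The paper needs only that the locus traced by $d$ be a smooth closed curve, namely the image of the circle $\Gamma_\mu$ under the smooth assignment $\theta\mapsto d$: every later use --- the winding of the diagonal $bd$, Lemma~\ref{lem:similar quadrileterals}, and above all the area identity of Lemma~\ref{lem:area} --- is invariant under reparametrization of the curve (signed area does not see the parametrization), and your own observation that $R|_{\Gamma_\mu}$ is a monotone homeomorphism already shows the curve is a single circle continuously parametrized by $\alpha$. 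So the stronger property you set out to secure, smoothness of $\alpha\mapsto d(\alpha)$ as a function of $\alpha$, is not needed, and your argument is complete (and agrees with the paper's) at the end of your first paragraph.

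Taken on its own terms, however, your second half has a genuine gap, and one your proposed remedy cannot close. You leave the transversality of $(S-\mu_d,\,\partial_s R,\,\partial_s^2 R)$ unproven and suggest obtaining it by parametric (Thom) transversality using the three parameters of $\mathcal Q$. But $\partial_s R$ and $\partial_s^2 R$ are functions of $\theta$ alone: at a regular point of $S$ the direction $\partial_s$ is determined by $\ker DS(\theta)$, independently of $Q$, so the $Q$-derivative of the last two equations is identically zero --- varying $Q$ merely translates the constraint $S(\theta)=\mu_d(Q)$. Consequently transversality of your four-equation system is equivalent to the fixed, $Q$-independent requirement that $d(\partial_s R)$ and $d(\partial_s^2 R)$ be linearly independent covectors at every common zero in $\mathbb{T}^3$; this is a property of the given curve $\gamma$, which the parameters of $\mathcal Q$ cannot enforce, and there is no reason it holds for an arbitrary strictly convex $C^\infty$ curve (the full-rank Jacobian you invoke at the end is exactly this condition, and strict convexity does not imply it). Without it, the degenerate locus $B=\{\partial_s R=\partial_s^2 R=0\}$ could a priori be $2$-dimensional with $S(B)$ of positive measure, and your measure-zero conclusion collapses. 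The repair is not better transversality but the weaker reading above: parametrize $d(\alpha)$ by $\Gamma_\mu$ itself, which your first paragraph already justifies and which is all that Lemmas~\ref{lem:similar quadrileterals} and~\ref{lem:area} and the proof of Theorem~\ref{thm:mainthm'} require.
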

\begin{proof}
Let $U$ be the space of ordered triples of pairwise distinct points of $\gamma$. Consider the map $f:U\rightarrow S^1\times S^1$ which sends a triple $(x,y,z)$ to the pair of angles $(\angle xyz, \angle yzx)$. Clearly, $f$ is~$C^\infty$.

Let $abcd$ be the vertices of $Q$ in the counterclockwise order. Then the curve $d(\alpha)$ corresponds to the $f$-preimage of the pair of angles $(\angle abc,\angle bca)$. By Sard's lemma the set of the critical values of $f$ has Lebesgue measure $0$, i.e., the set of cyclic quadrilaterals $Q$ such that $d(\alpha)$ is \emph{not} a $C^\infty$-curve also has Lebesgue measure $0$. Applying this argument to every vertex of $Q$ we get the statement of the lemma.
\end{proof}

\begin{lemma}
\label{lem:diagonal}
Let $Q$ be a cyclic quadrilateral. Then there is a vertex $d$ of $Q$ such that the angle at~$d$ is non-acute and $d(\alpha)$ contains a point either on $\gamma$ or in the exterior of $\gamma$.
\end{lemma}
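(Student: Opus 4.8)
The plan is to argue by contradiction, combining the signed-area observation of Karasev recalled in the introduction with the smoothness supplied by Lemma~\ref{lem:sard}. Since Lemma~\ref{lem:sard} produces a still-cyclic quadrilateral $Q_\varepsilon$, arbitrarily close to $Q$, all of whose loci are closed $C^\infty$-curves, I may assume from the outset that every locus $v(\alpha)$ of $Q$ is a closed $C^\infty$-curve. The only combinatorial input I need is that, $Q$ being cyclic, its opposite angles sum to $\pi$; hence in each pair of opposite vertices at least one angle is non-acute, and in particular $Q$ has a non-acute vertex. By Karasev's observation each closed curve $v(\alpha)$ bounds the same signed area as $\gamma$, namely $A=\vol(\Omega)$, where $\Omega$ is the open convex region enclosed by $\gamma$.

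Fix a non-acute vertex $d$ and suppose, for contradiction, that the entire curve $d(\alpha)$ lies strictly inside $\Omega$. The heart of the argument is then to convert the identity ``signed area $=A$'' into a contradiction. If one knows that $d(\alpha)$ has winding number at most $1$ about every point of the plane --- equivalently, that its signed area equals the ordinary Lebesgue area of the compact region it encloses --- then the conclusion is immediate: a closed curve lying in a compact subset of the open region $\Omega$ encloses a region strictly contained in $\Omega$, hence of area $<\vol(\Omega)=A$, contradicting the signed-area identity. Consequently $d(\alpha)$ must meet $\gamma$ or pass into its exterior, which is exactly the assertion of the lemma.

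The step I expect to be the main obstacle is precisely this control of the winding number, and it is here that both the non-acute hypothesis and the strict convexity of $\gamma$ enter. For the free vertex $d$ the remaining vertices $a,b,c$ lie on $\gamma$ and $ac$ is a chord; since the angle $\angle adc$ equals the non-acute angle of $Q$ at $d$, Thales' theorem places $d$ in the closed disk with diameter $ac$, so $d$ stays within distance $\tfrac12|ac|$ of that chord throughout the rotation. I would exploit this ``no overshoot'' property, together with the monotone rotation of the inscribed triangle $abc$ forced by strict convexity, to show that $d(\alpha)$ winds exactly once and thus accumulates no extra signed area through self-overlap. This is exactly where acuteness would be fatal: an acute vertex sits far from the diagonal $ac$, and its locus could wrap several times, so that the area identity might be satisfied while the curve stayed inside $\Omega$; the non-acute hypothesis is what excludes this and selects a vertex for which the area argument closes. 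A remaining minor point is to ensure, in the borderline case, a genuine exterior point rather than a one-sided tangency, which should again follow from strict convexity of $\gamma$.
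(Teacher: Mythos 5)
There is a genuine gap, and it sits exactly where you yourself flagged it: the winding-number control for $d(\alpha)$ is asserted but not proved, and the heuristics you offer cannot deliver it. Thales' theorem only confines $d$ to the closed disk with diameter $ac$, but $|ac|$ is a chord of $\gamma$ whose length varies with $\alpha$ and can be comparable to the diameter of $\gamma$, so this is no real constraint on how $d(\alpha)$ can wander and self-overlap. More importantly, the only tool in the paper for excluding self-intersections of $d(\alpha)$ is Lemma~\ref{lem:similar quadrileterals}, and its hypothesis --- that $d$ lies \emph{outside} the convex hull of the six points $a,b,c,a',b',c'$ --- holds precisely when $d(\alpha)$ is in the exterior of $\gamma$, and is exactly what fails in your setting, where $d$ is strictly inside $\gamma$ and hence typically inside that hull (the paper's Remark~\ref{rem: counterexamples} shows the conclusion of that lemma is delicate). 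So in the interior case self-overlap cannot be ruled out, the signed area computed by Karasev's identity (Lemma~\ref{lem:area}) can exceed the Lebesgue area of the region covered by the curve, and your contradiction never materializes. Note also that your argument, were it to work, would prove the conclusion for \emph{every} non-acute vertex, a stronger claim than the lemma's existential one; the paper never establishes that stronger claim, which is a warning sign. (A secondary, fixable issue: invoking Lemma~\ref{lem:sard} proves the statement only for a perturbed $Q_\varepsilon$, and recovering it for $Q$ itself needs a limiting argument --- the condition ``on $\gamma$ or in the exterior'' is closed, so this can be patched, but you do not say so.)

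The paper's actual proof is entirely elementary and uses none of this machinery: it is an intermediate-value argument, and it crucially exploits \emph{two adjacent} non-acute vertices $c$ and $d$ (which exist because opposite angles of a cyclic quadrilateral sum to $\pi$). Normalizing $\gamma$ to be tangent to $y=0$ and $y=1$, one places $a_t,b_t$ at the two intersections of $\gamma$ with the line $y=t$ and lets $c_t,d_t$ complete the drawing of $Q$ above that line. For $t$ near $1$ both $c_t$ and $d_t$ lie in the exterior of $\gamma$ (angle comparison with the tangent at $a_t$); for $t$ near $0$ both lie in the interior (the segments $a_td_t$, $b_tc_t$ become short relative to the width of $\gamma$ there, again using non-acuteness). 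Decreasing $t$ continuously, at the first moment one of $c_t,d_t$ lands on $\gamma$ the other is still on $\gamma$ or outside; at that moment three vertices are on $\gamma$, so the remaining vertex witnesses a point of its locus on or outside $\gamma$. The adjacency of the two non-acute vertices is what lets the paper choose, a posteriori, \emph{which} vertex works --- this is the structural idea your proposal is missing. The area identity is reserved in the paper for the complementary, exterior case (in the proof of Theorem~\ref{thm:mainthm'}), where Lemma~\ref{lem:similar quadrileterals} genuinely yields simplicity of $d(\alpha)$ and the area comparison is sound.
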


\begin{proof}
Let $abcd$ be the vertices of $Q$ in the counterclockwise order. 

There are two adjacent vertices of $Q$ with non-acute angles because $Q$ is cyclic. Without the loss of generality let us assume that the angles at $c$ and $d$ are non-acute. We may also assume that $\gamma$ is tangent to the lines $y=0$ and $y=1$.

For $t\in (0,1)$ denote by $a_t$ and $b_t$ the leftmost and the rightmost, respectively, of the two intersections of $\gamma$ with $y=t$.
Denote by $c_t$ and $d_t$ the points such that $a_tb_tc_td_t$ is a drawing of~$Q$.
Note that $c_t$ and $d_t$ are above the line $y=t$, see Figure~\ref{fig:inout}.

Consider the case when $t$ is very close to $1$. Then $\angle d_ta_tb_t$ is greater than the angle between $a_tb_t$ and the tangent to $\gamma$ at $a_t$. Which places $d_t$ is in the exterior of $\gamma$. Likewise, $c_t$ is also in the exterior of $\gamma$.

Consider now the opposite case of $t$ being very close to $0$. Then $\angle d_ta_tb_t$ is less than the angle between $a_tb_t$ and the tangent to $\gamma$ at $a_t$. Also, the segment $a_td_t$ is ``short'', i.e., much shorter than the intersection of the interior of $\gamma$ with the line parallel to $a_td_t$ and going through the common point of $\gamma$ and $y=0$. Which means that $d_t$ is in the interior of $\gamma$. Likewise, $c_t$ is also in the interior of $\gamma$.

Let us now continuously decrease $t$ from $1$ to $0$. At some moment one of the vertices $d$ or $c$ is going to intersect $\gamma$ while another one is still in the exterior of $\gamma$, or on $\gamma$. Without the loss of generality we may assume that the latter vertex is $d$. Then $d(\alpha)$ contains a point either on $\gamma$ or in the exterior of $\gamma$.
\end{proof}

	\parbox[b]{0.4\textwidth}{
	\begin{center}
	\includegraphics{fig-quadrileteral-12.mps}\\
	\f \label{fig:inout} 
	\end{center}
	}
	\hskip 0.3cm
	\parbox[b]{0.5\textwidth}{
	\begin{center}
	\includegraphics{fig-quadrileteral-10.mps}\\
	\f \label{fig:for ellipse} 
	\end{center}
	}

\begin{proof}[Proof of Theorem \ref{thm:mainthm'}.]
Choose a cyclic quadrilateral $Q_{\varepsilon}$ as in the statement of Lemma \ref{lem:sard}.

By Lemma \ref{lem:diagonal}, there is a vertex $d$ of $Q_{\varepsilon}$ with a non-acute angle and such that $d(\alpha)$ contains a point either on $\gamma$ or in the exterior of $\gamma$. 

If $d(\alpha)$ intersects $\gamma$ then we are done, so we may assume that $d(\alpha)$ and $\gamma$ are disjoint. 

By the Jordan curve theorem, $d(\alpha)$ lies in the exterior of $\gamma$. On the other hand, $\gamma$ lies in the interior of $d(\alpha)$. Indeed, as we revolve once along $d(\alpha)$, the diagonal $bd$ of the corresponding $\alpha$-drawing of $Q_{\varepsilon}$ must also complete a $2\pi$ rotation. This would be impossible if the interiors of~$\gamma$ and $d(\alpha)$ were disjoint.

By the following lemma, the curve $d(\alpha)$ has no self-intersections in the exterior of $\gamma$, i.e., no self-intersections at all.
\begin{lemma}
	\label{lem:similar quadrileterals}
	Let $abcd$ and $a'b'c'd$ be two distinct drawings of the same cyclic quadrilateral with a non-acute $\angle d$.
	Suppose that $d$ is outside of the convex hull of the points $a$,$b$,$c$,$a'$,$b'$,and $c'$. 
	Then six points $a$, $b$, $c$ and $a'$, $b'$, $c'$ cannot be in strictly convex position.
\end{lemma}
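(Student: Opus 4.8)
The plan is to pass to complex coordinates with $d$ at the origin of $\mathbb{C}$ and exploit that the two drawings differ by a spiral similarity fixing $d$. Since $abcd$ and $a'b'c'd$ are directly similar and share the vertex $d$, the orientation‑preserving similarity carrying one to the other fixes $0$, hence is multiplication by some $\mu=\lambda e^{i\theta}\neq 1$; thus $(a',b',c')=(\mu a,\mu b,\mu c)$. Two normalizations come for free. Swapping the two drawings replaces $\mu$ by $\mu^{-1}$, and reflecting the whole picture in a line through $d$ replaces $\mu$ by $\bar\mu$ while sending $Q$ to its mirror image (again a cyclic quadrilateral with a non‑acute angle at $d$, and strictly convex position is reflection invariant). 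Composing these, I may assume $\lambda\ge 1$ and $\theta\in[0,\pi)$, and distinctness gives $(\lambda,\theta)\neq(1,0)$.

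Next I reformulate the hypothesis on $d$. That $d=0$ lies outside the convex hull of the six points is equivalent to all six lying in an open half‑plane bounded by a line through $0$, i.e. their arguments lie in an open arc of length $\pi$. Writing $\delta=\angle adc\ge\pi/2$ for the non‑acute angle, convexity of the cyclic quadrilateral lets me normalize so that $\arg a<\arg b<\arg c$ with $\arg c-\arg a=\delta$ (the diagonal $db$ lies inside the angle at $d$). The six arguments are these three together with their $\theta$‑translates, so the extreme ones are $\arg a$ and $\arg c+\theta$; the arc condition forces $\delta+\theta<\pi$, whence $\theta<\pi/2\le\delta$, and in particular $0\le\theta<\delta$. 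Finally, since $Q$ is cyclic, $a,b,c,d$ are concyclic and so are $a',b',c',d$: the two triples lie on circles $C_1,C_2$ through the origin, with $C_2=\mu C_1$ of radius $\lambda$ times that of $C_1$. In polar form $C_1\colon\rho=2R\cos(\psi-\beta)$ and $C_2\colon\rho=2\lambda R\cos(\psi-\beta-\theta)$.

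Now assume, for contradiction, that the six points are in strictly convex position. Because each of $a,b,c$ and $a',b',c'$ lies on a circle, on the hull they must occur in their circular order, which restricts how the two triples interleave around the hexagon; moreover $a$ (least argument) and $c'$ (greatest) are the two points where the tangents from $d$ touch the hull. The goal is to exhibit one vertex that is caved in, i.e. lies strictly on the origin side of the chord joining its two angular neighbours, contradicting convexity. The non‑acute condition enters precisely through $\theta<\delta$, which keeps the angular ranges of the two triples overlapping; together with $\lambda\ge 1$ it forces the ``leading'' neighbour $c$ of $d$ to lie strictly inside the larger circle $C_2$ (equivalently $\cos(\arg c-\beta)<\lambda\cos(\arg c-\beta-\theta)$, which holds since $|\arg c-\beta-\theta|<|\arg c-\beta|$ and $\lambda\ge 1$, with strictness from $(\lambda,\theta)\neq(1,0)$). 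Once $c$ sits inside $C_2$ and is flanked in argument by points of $C_2$, it is swallowed by the chord through those neighbours.

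The main obstacle is exactly this last step, because the identity of the swallowed vertex depends on the regime: it is the middle vertex $b$ that falls inside the outer homothet when $\theta=0$, and it is $c$ (or, in an intermediate range, $a'$) once the rotation dominates, and in the borderline subcases the relevant angular neighbour of $c$ may itself lie on $C_1$ rather than $C_2$. Thus the clean‑sounding assertion ``some point is interior'' must be established by a short case split keyed to where $b$ and $b'$ fall in the angular order, each case reducing to a single chord‑versus‑radius inequality that I expect the bounds $\lambda\ge1$ and $0\le\theta<\pi-\delta$ to settle. Finding one uniform inequality covering all regimes, or replacing the case split by a single monotonicity argument as $\mu$ varies, is where the real work lies.
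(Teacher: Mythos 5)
Your setup is sound and genuinely different from the paper's: placing $d$ at the origin, writing the second drawing as $(a',b',c')=(\mu a,\mu b,\mu c)$ for a spiral similarity $\mu=\lambda e^{i\theta}$, normalizing $\lambda\ge 1$, $\theta\in[0,\pi)$, and extracting $\theta<\pi-\delta\le\pi/2$ from the condition that $d$ lies outside the hull --- all of this is correct and uses the hypotheses (cyclic, non-acute $\angle d$, $d$ exterior) in the right places. But the proof stops exactly where the lemma's content begins. You state yourself that the contradiction with strict convex position ``must be established by a short case split \ldots\ where the real work lies,'' and that case split is never carried out. That is not a presentational shortcut: it is the entire substance of the lemma, and it is precisely where the paper spends its effort --- a full synthetic case analysis covering the angular region $\angle cda$ by four zones (the half-plane beyond $ab$, the interior of the auxiliary circle $\omega$ through $d$ tangent to $ab$ at $a$, and two zones handled by inscribed-angle arguments producing intersection points on the lines $ab$ and $ac$ that violate convex position). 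Your sketch has no analogue of these steps, and your own description concedes that the identity of the ``swallowed'' vertex changes across regimes ($b$, $c$, or $a'$) with borderline subcases unresolved.

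Moreover, the one inequality you do verify does not yield a contradiction on its own. Knowing that $c$ lies strictly inside the circle $C_2$ (or even inside a triangle $dpq$ with $p,q$ among the six points) does not place $c$ in the convex hull of the \emph{other five points}, because $d$ is not one of the six and is assumed to lie \emph{outside} their hull: a point can be interior to $C_2$, flanked in argument by points of $C_2$, and still be a vertex of the hexagon, namely when it sits on the near side of the relevant chord (the side of $d$), since the near chain of a convex hull seen from an exterior point is exactly the part closest to $d$. Distinguishing ``nearer than the chord'' from ``beyond the chord,'' and matching the angular order around $d$ with the cyclic order around the hull, is the unhandled interleaving analysis. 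Separately, the inequality $|\arg c-\beta-\theta|<|\arg c-\beta|$ is asserted without locating $\beta$ relative to $\arg c$, so even that step needs justification. As it stands the proposal is a plausible program in complex coordinates --- potentially a nice alternative to the paper's synthetic argument, especially if a monotonicity-in-$\mu$ argument could replace the zones --- but it is not a proof.
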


It is left to note that the area of $d(\alpha)$ must then be greater than the area of $\gamma$ which contradicts to the following lemma proven in \cite{karasev2016anote}.
\begin{lemma}
	\label{lem:area}
Let $Q$ be a cyclic quadrilateral and let $d(\alpha)$ be a $C^\infty$-curve for some vertex $d$ of $Q$. Then the area of $d(\alpha)$ is equal to the area of $\gamma$.
\end{lemma}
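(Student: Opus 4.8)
The plan is to compute the signed area enclosed by $d(\alpha)$ as a complex line integral and to recognize its dependence on the shape of $Q$ as a quadratic function whose level sets are circles. Identify the plane with $\mathbb{C}$ and write a similarity as $z\mapsto \lambda z+\mu$. If $A,B,C,D\in\mathbb{C}$ are the vertices of the reference quadrilateral, then in any drawing with $a,b,c$ on $\gamma$ one has $\lambda=(b-a)/(B-A)$, and a short computation gives $c=(1-\sigma)a+\sigma b$ and $d=(1-\rho)a+\rho b$, where $\sigma=(C-A)/(B-A)$ and $\rho=(D-A)/(B-A)$ depend only on the shape of $Q$. Setting $p_w=(1-w)a+w\,b$ for $w\in\mathbb{C}$, the fourth vertex traces $p_\rho(\alpha)=d(\alpha)$, the third vertex traces $p_\sigma(\alpha)=c(\alpha)$, while $p_0=a$ and $p_1=b$ trace $\gamma$ itself.

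First I would write the signed area as $\mathcal{A}(w)=\tfrac12\operatorname{Im}\oint \overline{p_w}\,dp_w$ and expand it into the four bilinear integrals $I_{xy}=\oint \bar x\,dy$ with $x,y\in\{a,b\}$. The closed-loop identities $\oint d(\bar a a)=\oint d(\bar b b)=\oint d(\bar a b)=0$ yield $I_{aa},I_{bb}\in i\mathbb{R}$ and $I_{ab}=-\overline{I_{ba}}$; and since $a(\alpha)$ and $b(\alpha)$ each traverse $\gamma$ exactly once (positively) as $\alpha$ runs over $[0,2\pi]$, we get $\tfrac12\operatorname{Im}I_{aa}=\tfrac12\operatorname{Im}I_{bb}=G$, where $G$ is the area enclosed by $\gamma$. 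Writing $w=u+iv$ and $J:=I_{ba}=J_1+iJ_2$ and substituting these relations, the imaginary part collapses to the explicit quadratic
\[
\mathcal{A}(w)=G-2Gu+J_2u-J_1v+(2G-J_2)(u^2+v^2).
\]
Its quadratic part is a multiple of $u^2+v^2$, so completing the square shows that every level set of $\mathcal{A}$ is a circle, or a line in the degenerate case $J_2=2G$.

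The conclusion is then geometric. One checks directly that $\mathcal{A}(0)=\mathcal{A}(1)=G$, so $\{\mathcal{A}=G\}$ is the circle $\mathcal{C}$ through $0$ and $1$. Because the third vertex $c(\alpha)=p_\sigma(\alpha)$ also traces $\gamma$ once, $\mathcal{A}(\sigma)=G$, hence $\sigma\in\mathcal{C}$; since $A,B,C$ are not collinear, $\sigma\notin\mathbb{R}$, which both excludes the degenerate line case and pins $\mathcal{C}$ down as the unique circle through the three non-collinear points $0,1,\sigma$. Now the map $z\mapsto (z-A)/(B-A)$ is a similarity carrying $A,B,C,D$ to $0,1,\sigma,\rho$ and therefore carrying the circumcircle of the cyclic quadrilateral $Q$ onto $\mathcal{C}$. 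As $Q$ is cyclic, $D$ lies on that circumcircle, so $\rho\in\mathcal{C}$ and $\mathcal{A}(\rho)=G$. But $\mathcal{A}(\rho)$ is precisely the signed area enclosed by $d(\alpha)$, so it equals the area of $\gamma$.

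The step I expect to require the most care is the winding claim that $a(\alpha)$, $b(\alpha)$, and $c(\alpha)$ each run over $\gamma$ exactly once with the correct orientation; this is what converts the formal integrals $I_{aa},I_{bb}$ and $\tfrac12\operatorname{Im}\oint\bar c\,dc$ into the single value $G$ and underlies the normalizations $\mathcal{A}(0)=\mathcal{A}(1)=\mathcal{A}(\sigma)=G$. It should follow from strict convexity together with Lemma~\ref{lem:inscribed triangle}: for each $\alpha$ the inscribed triangle is unique, and as $\alpha$ increases by $2\pi$ the whole drawing rotates once and returns to itself, forcing each vertex around $\gamma$ exactly once. Making the orientation and the ``exactly once'' rigorous is the crux of the argument.
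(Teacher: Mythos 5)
The paper does not actually prove Lemma~\ref{lem:area} itself but defers to Karasev's note \cite{karasev2016anote}, and your argument is essentially the proof given there: the signed area $\mathcal{A}(w)$ of the trajectory of the point with similarity coordinate $w$ is a real quadratic whose quadratic part is a multiple of $u^2+v^2$, so its level set through $0$, $1$, $\sigma$ is the image of the circumcircle of $Q$, and cyclicity places $\rho$ on it; your expansion of $\mathcal{A}(w)$ and the endgame are correct, and even the degenerate case $J_2=2G$ is harmless, since then $\mathcal{A}(\sigma)=G$ with $\sigma\notin\R$ forces $J_1=0$ and $\mathcal{A}\equiv G$. The one step you rightly flag, that $\alpha\mapsto a(\alpha)$, $b(\alpha)$, $c(\alpha)$ have degree $1$ as maps $S^1\to\gamma$, is genuinely needed (only the degree matters, since $\oint \bar z\,dz$ pulled back along a degree-$k$ map into $\gamma$ gives $k\cdot 2iG$, so monotonicity is not required), and it can be settled, e.g., by deforming $\gamma$ to a circle through strictly convex curves: uniqueness of the $\alpha$-drawing (Lemma~\ref{lem:inscribed triangle}) makes the vertex maps depend continuously on the deformation, the degree is therefore constant, and for the circle each vertex visibly makes one positive revolution.
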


\end{proof}

\section{Proofs of Theorem~\ref{thm:mainthm} and Theorem~\ref{thm:rectangle theorem}.}

\begin{proof}[Proof of Theorem~\ref{thm:mainthm}]
Let $\gamma$ be a closed convex $C^1$-curve and let $Q$ be a cyclic quadrilateral. Choose a sequence $\gamma_i$ of closed strictly convex $C^\infty$-curves converging to $\gamma$ pointwise. By Theorem~\ref{thm:mainthm'}, there is a sequence $Q_i$ of cyclic quadrilaterals converging to $Q$ and such that $Q_i$ can be inscribed in $\gamma_i$ for each $i$.

Let $a_i,b_i,c_i,d_i\in\gamma_i$ be the vertices of $Q_i$ in the counterclockwise order. By passing to the subsequences, we may assume that the sequences $a_i$, $b_i$, $c_i$, and $d_i$ have limits, $a$, $b$, $c$, and $d$, respectively.

The quadrilateral $abcd$ is inscribed in $\gamma$ and is obtained from $Q$ by a composition of scaling, rotation, and translation. It remains to prove that the scaling is non-degenerate.

Assume to the contrary that $a=b=c=d$. Then both $a_ib_i$ and $b_ic_i$ converge to the tangent to $\gamma$ at $a=b=c=d$. I.e., $\angle a_ib_ic_i$ converges to either $0$ or $\pi$. On the other hand, $\angle a_ib_ic_i$ must converge to the corresponding angle of $Q$. Angles of $Q$ are neither $0$ nor $\pi$, which leads to a contradiction.
\end{proof}

\begin{proof}[Proof of Theorem~\ref{thm:rectangle theorem}]
Let $\gamma$ be a closed convex curve and let $Q$ be a rectangle. Let $\gamma_i$, $Q_i$, $a_i$, $b_i$, $c_i$, $d_i$, $a$, $b$, $c$, and $d$ be as in the proof of Theorem~\ref{thm:mainthm}.

Again, it remains to prove that the rectangle $abcd$ is non-degenerate.
Assume to the contrary, that $a=b=c=d$.

The vertices of $Q_i$ divide $\gamma_i$ into $4$ arcs which we denote in the counterclockwise order by $A_i$, $B_i$, $C_i$, and $D_i$, see Figure~\ref{fig:rect}. The lengths of $3$ out of $4$ arcs converge to $0$. Without the loss of generality we assume that the lengths $l(A_i)$ and $l(C_i)$ of $A_i$ and $C_i$, respectively, converge to $0$.

Denote by $L_{A_i}$ and $L_{C_i}$ the tangents to $A_i$ and $C_i$ parallel to $a_ib_i$, see Figure~\ref{fig:rect}. The distance between $L_{A_i}$ and $L_{C_i}$ is less than $l(A_i) + l(C_i) + |b_ic_i|$, i.e., it converges to $0$. Which contradicts to the fact that the curve $\gamma_i$ lies between $L_{A_i}$ and $L_{C_i}$ for each $i$.
\end{proof}

	\begin{center}
	\includegraphics{fig-quadrileteral-11.mps}\\
	\f \label{fig:rect} 
	\end{center}

\section{Proof of Lemma~\ref{lem:similar quadrileterals}}

Assume to the contrary of the statement of the lemma that the points $a$, $b$, $c$, $a'$, $b'$, and $c'$ lie in strictly convex position.

	Draw the lines containing the sides of the triangle $abc$, and denote the angular regions of the plane formed by them as shown in Figure \ref{fig:halfplane}.
	At first, let us note that the point $a'$ cannot belong to the region $C_a$, because in that case $a$ is covered by the triangle $a'bc$.
	Analogously, the points $b'$ and $c'$ cannot belong to the regions $C_b$ and $C_c$, respectively.

	Denote by $\Omega$ the circumcircle of $abcd$ and let $\ell$ be the tangent line to $\Omega$ at $a$.
	Together with the lines $ab$ and $ac$ the line $\ell$ forms two additional angular regions denoted by $C_b'$ and $C_c'$, respectively, see Figure~\ref{fig:halfplane}. 
	
	It is easy to see that the composition of a homothety and a rotation around $d$ which sends $a$ to $b$ and $a'$ to $b'$ also sends $C_b'$ to $C_b$. So, if $a'$ lies in $C_b'$ then $b'$ lies in $C_b$, which we already showed to be impossible. Therefore $a'$ cannot lie in $C_b'$. Analogously, $a'$ cannot lie in $C_c'$.
	
	We proved that $a'$ cannot lie in the union of $C_a$, $C_b'$, and $C_c'$, which means that $a'$ and $d$ lie on the same side of the line $ab$. Similarly, $a$ and $d$ lie on the same side of the line $a'b'$. From the latter fact we	 conclude that $a'$ lies in the exterior of the circle $\omega$ passing through $d$ and tangent to $ab$ at $a$ (Figure~\ref{fig:circle}). To see this the reader might consider that $a'b'$ passes through $a$ iff $a'$ belongs to $\omega$.

	We know that $d$ lies outside of the convex hull of $a$, $b$, $c$, $a'$, $b'$, and $c'$ and that $\angle cda=\angle c'da'\geq \pi/2$,  so $\angle ada' < \pi/2$. Thus, without the loss of generality we may assume that the counterclockwise rotation sending $da$ to $da'$ is at most $\pi/2$. Therefore, $a'$ lies in the angular region formed by $\angle cda$.
	
	This region is covered by the four grey zones in Figures~\ref{fig:halfplane},~\ref{fig:circle},~\ref{fig:segment rotation inside},~\ref{fig:segment rotation outside}. Let us give a verbal description of the zones and prove that $a'$ cannot lie in them. This will conclude the proof of the lemma. 

	\parbox[b]{0.4\textwidth}{
		\begin{center}
			\includegraphics{fig-quadrileteral-6.mps}\\
		\f \label{fig:halfplane}
		\end{center}
	}
	\hskip 0.3cm
	\parbox[b]{0.5\textwidth}{
		\begin{center}
			\includegraphics{fig-quadrileteral-7.mps}\\
		\f \label{fig:circle}
		\end{center}
	}
	
	\begin{itemize}
	\item The grey zone in Figure~\ref{fig:halfplane} is the halfplane bounded by $ab$ and not containing $d$.
	\item The grey zone in Figure~\ref{fig:circle} is the interior of $\omega$.
	\end{itemize}
	We have already proved that $a'$ cannot lie in the grey zones in Figures~\ref{fig:halfplane} and~\ref{fig:circle}.
	
	\begin{itemize}
	\item The grey zone in Figure~\ref{fig:segment rotation inside} is the intersection of the halfplane bounded by $ab$ and containing $d$, the exterior of $\omega$, and the interior of $\Omega$. Suppose that $a'$ is in that grey zone.
	
	Denote by $x$ the point of intersection of the line $ab$ with the circle going through $a$, $a'$, and $d$. Point $x$ is inside of the segment $ab$. From the angular property of an iscribed quadrilateral it follows that $b'$, $a'$, and $x$ lie on the same line. So, the intersection point $x$ of $ab$ and $a'b'$ lie inside of the segment $ab$ and outside of the segment $a'b'$ which contradicts to the convex position of $a$, $b$, $a'$, and $b'$.
	
	\item The grey zone in Figure~\ref{fig:segment rotation outside} is the intersection of the halfplane bounded by $\ell$ and containing $c$, the halfplane bounded by $ac$ and not containing $d$, and the exterior of $\Omega$. Suppose that $a'$ is in that grey zone.
	
	Denote by $y$ the point of intersection of the line $ac$ with the circle going through $a$, $a'$, and $d$. Point $y$ is outside of the segment $ac$. From the angular property of an iscribed quadrilateral it follows that $c'$, $a'$, and $y$ lie on the same line. So, the intersection point $y$ of $ac$ and $a'c'$ lie outside of the segment $ac$ and inside of the segment $a'c'$ which contradicts to the convex position of $a$, $c$, $a'$, and $c'$.
	\end{itemize}

\begin{remark}
	\label{rem: counterexamples}
	 Lemma~\ref{lem:similar quadrileterals} does not hold if the quadrilateral is not assumed to be cyclic (Figure~\ref{fig:non-inscribed-convex}) or if the angle at $d$ is allowed to be acute (Figure~\ref{fig:acute-convex}).
\end{remark}
\subsection*{Acknowledgments}
The first author is supported by the European Research Council (ERC) under the European Union's Horizon 2020 research and innovation programme (grant agreement  No 716117). 

The authors are grateful to Roman Karasev for useful discussions and ideas.

	\parbox[b]{0.4\textwidth}{
		\begin{center}
			\includegraphics{fig-quadrileteral-4.mps}\\
		\f \label{fig:segment rotation inside} 
		\end{center}
	}
	\hskip 0.3cm
	\parbox[b]{0.5\textwidth}{
		\begin{center}
			\includegraphics{fig-quadrileteral-5.mps}\\
		\f \label{fig:segment rotation outside} 
		\end{center}
	}

	\parbox[b]{0.4\textwidth}{
		\begin{center}
			\includegraphics{fig-quadrileteral-1.mps}\\
		\f \label{fig:non-inscribed-convex} 
		\end{center}
	}
	\hskip 0.3cm
	\parbox[b]{0.5\textwidth}{
		\begin{center}
			\includegraphics{fig-quadrileteral-2.mps}\\
		\f \label{fig:acute-convex}	
		\end{center}
	}

%

\bibliography{quadri}
\bibliographystyle{abbrv}

\end{document}